\documentclass[11pt]{article}%
\usepackage{amsmath}
\usepackage{amsfonts}
\usepackage{amssymb}
\usepackage{graphicx}%
\setcounter{MaxMatrixCols}{30}
%TCIDATA{OutputFilter=latex2.dll}
%TCIDATA{Version=5.50.0.2960}
%TCIDATA{CSTFile=40 LaTeX article.cst}
%TCIDATA{Created=Monday, April 23, 2012 07:50:45}
%TCIDATA{LastRevised=Thursday, June 18, 2015 06:54:24}
%TCIDATA{<META NAME="GraphicsSave" CONTENT="32">}
%TCIDATA{<META NAME="SaveForMode" CONTENT="1">}
%TCIDATA{BibliographyScheme=Manual}
%TCIDATA{<META NAME="DocumentShell" CONTENT="Standard LaTeX\Blank - Standard LaTeX Article">}
%BeginMSIPreambleData
\providecommand{\U}[1]{\protect\rule{.1in}{.1in}}
%EndMSIPreambleData
\newtheorem{theorem}{Theorem}

\newtheorem{lemma}[theorem]{Lemma}

\newtheorem{proposition}[theorem]{Proposition}
\newtheorem{remark}[theorem]{Remark}

\newenvironment{proof}[1][Proof]{\noindent\textbf{#1.} }{\ \rule{0.5em}{0.5em}}
\addtolength{\oddsidemargin}{-.9in}
\addtolength{\evensidemargin}{-.9in}
\addtolength{\textwidth}{1.8in}
\addtolength{\topmargin}{-1.6in}
\addtolength{\textheight}{2.6in}
\begin{document}

\title{A bound for the order of the fundamental group of a complete noncompact Ricci
shrinker$\vspace{-0.06in}$}
\author{Bennett Chow\thanks{Dept. of Math., UC San Diego, La Jolla, CA\ 92093}
\and Peng Lu\thanks{Dept. of Math., Univ. of Oregon, Eugene, OR 97403. P.\thinspace
L. is partially supported by a grant from Simons Foundation.}}
\date{}
\maketitle

$\vspace{-0.4in}$

For shrinking (gradient) Ricci solitons, we note a quantification of Wylie's
\cite{Wylie} result that $\pi_{1}(\mathcal{M})\!<\!\infty$.$\vspace{-0.06in}$

\begin{lemma}
[$f$-uniqueness]Let $(\mathcal{M}^{n},g,f_{i},\varepsilon)$, $i=1,2$, be a
complete gradient Ricci soliton \emph{(}GRS\emph{),} i.e., $\operatorname{Rc}%
+\nabla^{2}f_{i}+\frac{\varepsilon}{2}g=0$. Then $f_{1}-f_{2}$ is constant or
$(\mathcal{M},g)$ is isometric to $(\mathbb{R},ds^{2})\times(\mathcal{N}%
^{n-1},h)$, where $(\mathcal{N},h)$ is isometric to each level set
$\{f_{1}-f_{2}=c\}$, $c\in\mathbb{R}$. Moreover, $(f_{1}-f_{2})(s,y)=as+b$,
$a,b\in\mathbb{R}$.$\vspace{-0.06in}$
\end{lemma}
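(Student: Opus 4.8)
The plan is to work with the difference $u := f_1 - f_2$ and exploit that it has vanishing Hessian. Subtracting the two soliton identities $\operatorname{Rc} + \nabla^2 f_i + \frac{\varepsilon}{2}g = 0$ cancels both the Ricci and the metric terms, leaving $\nabla^2 u = \nabla^2 f_1 - \nabla^2 f_2 = 0$. Hence $\nabla u$ is a parallel vector field, and in particular $|\nabla u|$ is constant, say $|\nabla u| \equiv a \ge 0$. This immediately produces the dichotomy: if $a = 0$ then $\nabla u \equiv 0$, so $u$ is constant on the connected manifold $\mathcal{M}$, which is the first alternative. From here on I would assume $a > 0$.

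When $a > 0$, the field $E := a^{-1}\nabla u$ is a globally defined unit parallel vector field. Since $\nabla E = 0$, its integral curves satisfy $\nabla_E E = 0$ and are therefore unit-speed geodesics; completeness of $(\mathcal{M},g)$ then guarantees that the flow $\phi_t$ of $E$ is defined for all $t \in \mathbb{R}$. Moreover $\mathcal{L}_E g = 0$, because the symmetric part of $\nabla E$ vanishes, so $E$ is Killing and each $\phi_t$ is an isometry of $(\mathcal{M},g)$. Fixing a value $c$ and setting $\mathcal{N} := \{u = c\}$ — a smooth, totally geodesic hypersurface, being a regular level set with unit normal $E$ and second fundamental form $\langle \nabla_{\,\cdot\,} E, \cdot \rangle = 0$ — I would define $\Phi : \mathbb{R} \times \mathcal{N} \to \mathcal{M}$ by $\Phi(t,y) = \phi_t(y)$ and show it is the desired isometry.

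To verify the product structure, note that $\Phi_*(\partial_t) = E$ has unit length, while $\Phi_*(T\mathcal{N})$ is carried into the tangent space of the level set $\{u = c + at\} = \phi_t(\mathcal{N})$, hence stays orthogonal to $E$; this kills the cross terms. Because $\phi_t$ is an isometry restricting to an isometry $\mathcal{N} \to \phi_t(\mathcal{N})$, the metric induced on each slice pulls back to the fixed metric $h := g|_{\mathcal{N}}$, so $\Phi^* g = dt^2 + h$ with no $t$-dependence. The value of $u$ along a flow line changes at the rate $du(E) = a^{-1}|\nabla u|^2 = a$, while $u$ is constant on each slice; thus $u \circ \Phi = at + b$, which gives the claimed formula after relabeling $t$ as $s$, and exhibits each level set $\{u = c\}$ as isometric to $(\mathcal{N}, h)$.

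The main work is to promote $\Phi$ from a local isometry to a global one, and this is exactly where completeness is essential. I would argue bijectivity through $u$ itself: along any flow line the quantity $u(\phi_t(p))$ is strictly increasing with derivative $a > 0$ and, by geodesic completeness, sweeps out all of $\mathbb{R}$, so each integral curve meets $\mathcal{N}$ exactly once. This yields surjectivity (every point lies on a flow line through $\mathcal{N}$) and injectivity in $t$ (the parameter is recovered from $u$), while injectivity in $y$ follows since $\phi_t$ is a diffeomorphism. Finally, $\Phi$ being a bijective local isometry with $\mathcal{M}$ connected forces $\mathcal{N}$ to be connected, completing the splitting. The delicate point to guard against is distinct flow lines recombining or $\mathcal{N}$ being missed altogether; both are ruled out by the strict monotonicity of $u$ together with completeness of the geodesic flow.
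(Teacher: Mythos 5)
Your argument is correct and follows essentially the same route as the paper: subtract the soliton equations to get $\nabla^{2}(f_1-f_2)=0$, observe $|\nabla(f_1-f_2)|$ is a constant $a$, and when $a>0$ use the (complete) isometric flow of the parallel gradient field to carry a totally geodesic level set across $\mathcal{M}$ and produce the splitting. The only difference is cosmetic --- you flow along the unit field $a^{-1}\nabla u$ rather than $\nabla u$ itself --- and you supply more detail on the bijectivity of the product map than the paper does.
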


\begin{proof}
Let $F=f_{1}-f_{2}$. Then $\nabla^{2}F=0$. Assume that $F$ is not a constant.
Then $\left\vert \nabla F\right\vert =a$, where $a\in\mathbb{R}^{+}$. Let
$\Sigma_{c}=\{F=c\}$. Then $\Sigma_{c}$ is $C^{\infty}$ and its second
fundamental form is $\operatorname{II}(X,Y)=\frac{\nabla^{2}F(X,Y)}{|\nabla
F|}=0$. Let $\{\varphi_{t}\}_{t\in\mathbb{R}}$ be the $1$-parameter group of
isometries generated by $\nabla F$. We have $F\circ\varphi_{t}=F+a^{2}t$.
Moreover, $\varphi_{t}$ maps $\Sigma_{c}$ isometrically onto $\Sigma
_{c+a^{2}t}$. \thinspace This produces the product structure on $(\mathcal{M}%
,g)$.\thinspace\thinspace\thinspace\thinspace\thinspace\thinspace
\end{proof}

Let $(\mathcal{M}^{n},g,f)$ be a complete noncompact shrinking GRS, where
$\operatorname{Rc}+\nabla^{2}f=\frac{1}{2}g$ and $f$ is normalized by
$R+|\nabla f|^{2}=f$. Let $(\mathcal{\tilde{M}}^{n},\tilde{g},\tilde{f})$ be
the universal covering shrinking GRS, that is, $\pi:\mathcal{\tilde{M}%
}\rightarrow\mathcal{M}$ is the universal covering map, $\tilde{g}=\pi^{\ast
}g$, and $\tilde{f}=f\circ\pi$. Then we have that $\operatorname{Rc}%
_{\tilde{g}}+\nabla_{\tilde{g}}^{2}\tilde{f}=\frac{1}{2}\tilde{g}$ and
$R_{\tilde{g}}+|\nabla\tilde{f}|_{\tilde{g}}^{2}=\tilde{f}$.

Let $\gamma\in\pi_{1}(\mathcal{M})-\{e\}$. Then $\gamma$ corresponds to a deck
transformation $\gamma:\mathcal{\tilde{M}}\rightarrow\mathcal{\tilde{M}}$ that
is a fixed-point-free isometry of the metric $\tilde{g}$. Since both
$\tilde{f}$ and $\tilde{f}\circ\gamma$ are normalized potential functions for
$\tilde{g}=\gamma^{\ast}\tilde{g}$, by the lemma we have $\tilde{f}\circ
\gamma=\tilde{f}$. In particular, $\gamma$ is a fixed-point-free isometry of
any level or sublevel set of $\tilde{f}$. Hence $\operatorname{Vol}_{\tilde
{g}}(\{\tilde{f}\leq s\})=|\pi_{1}(\mathcal{M})|\operatorname{Vol}_{g}(\{f\leq
s\})$ for any $s>0$. Let $O$ be a minimum point of $f$. By Cao and Zhou
\cite{CaoZhou} and its improvement of constants by Haslhofer and M\"{u}ller
\cite{HaslhoferMuller}, we have$\vspace{-0.06in}$%
\begin{equation}
\frac{1}{4}\left(  (d\left(  x,O\right)  -5n)_{+}\right)  ^{2}\leq f\left(
x\right)  \leq\frac{1}{4}(d(x,O)+\sqrt{2n})^{2}\vspace{-0.06in}%
\label{f Lower Bound with 35 Over 8}%
\end{equation}
for $g$ and the same inequalities for $\tilde{g}$ (using $\tilde{f}$ and using
a lift $\tilde{O}$ of $O$), and $\operatorname{Vol}_{\tilde{g}}(\{\tilde
{f}\leq s\})\leq C(n)s^{n/2}$. Similarly, Munteanu and Wang
\cite{MunteanuWangDensities} proved that $\operatorname{Vol}B_{r}(O)\leq
\omega_{n}e^{n/2}r^{n}$ for $r>0$, where $\omega_{n}=\operatorname{Vol}%
_{\mathbb{R}^{n}}B_{1}$. By Munteanu and Wang \cite{MunteanuWangWeighted},
there is a constant $c(n,\int_{\mathcal{M}}e^{-f}d\mu)>0$ such that
$\operatorname{Vol}B_{r}(O)\geq cr$ for $r\geq1$.$\vspace{-0.06in}$

\begin{proposition}
If $(\mathcal{M}^{n},g,f)$ is a complete noncompact shrinking GRS, then
$|\pi_{1}(\mathcal{M})|\leq C(n,\int_{\mathcal{M}}e^{-f}d\mu)$.$\vspace
{-0.06in}$
\end{proposition}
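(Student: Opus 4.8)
The plan is to read off the bound directly from the volume covering identity already established above, namely
\[
\operatorname{Vol}_{\tilde{g}}(\{\tilde{f}\leq s\})=|\pi_{1}(\mathcal{M})|\,\operatorname{Vol}_{g}(\{f\leq s\}),
\]
which holds for every $s>0$ because $\tilde f=f\circ\pi$ makes $\{\tilde f\le s\}$ the $|\pi_1(\mathcal{M})|$-fold cover of $\{f\le s\}$. Solving for $|\pi_{1}(\mathcal{M})|$ reduces everything to bounding the numerator from above and the denominator from below, and then choosing $s$ so that the two estimates combine into a quantity depending only on $n$ and $\int_{\mathcal{M}}e^{-f}\,d\mu$.

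The numerator estimate is already in hand: $\operatorname{Vol}_{\tilde{g}}(\{\tilde{f}\leq s\})\leq C(n)s^{n/2}$. The work is in the denominator. Here I would use the upper bound in \eqref{f Lower Bound with 35 Over 8}: since $f(x)\leq\frac14(d(x,O)+\sqrt{2n})^{2}$, any $x$ with $d(x,O)\leq r$ satisfies $f(x)\leq\frac14(r+\sqrt{2n})^{2}=:s$, so that $B_{r}(O)\subseteq\{f\leq s\}$ and hence $\operatorname{Vol}_{g}(\{f\leq s\})\geq\operatorname{Vol}_{g}B_{r}(O)$. Feeding in the Munteanu--Wang lower bound $\operatorname{Vol}B_{r}(O)\geq c(n,\int_{\mathcal{M}}e^{-f}d\mu)\,r$, valid for $r\geq1$, gives $\operatorname{Vol}_{g}(\{f\leq s\})\geq c\,r$ with $s=\frac14(r+\sqrt{2n})^{2}$.

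The decisive point is the choice of scale. Combining the two estimates yields
\[
|\pi_{1}(\mathcal{M})|\leq\frac{C(n)\,s^{n/2}}{c\,r},\qquad s=\tfrac14(r+\sqrt{2n})^{2}.
\]
Because the numerator grows like $s^{n/2}\sim r^{n}$ while the denominator is only linear in $r$, letting $r\to\infty$ is useless; instead one takes $r$ as small as the Munteanu--Wang bound permits, namely $r=1$, so that $s=\frac14(1+\sqrt{2n})^{2}$ depends only on $n$. This gives
\[
|\pi_{1}(\mathcal{M})|\leq\frac{C(n)}{c(n,\int_{\mathcal{M}}e^{-f}d\mu)}\left(\frac{(1+\sqrt{2n})^{2}}{4}\right)^{n/2},
\]
which is of the asserted form $C(n,\int_{\mathcal{M}}e^{-f}\,d\mu)$. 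I expect the only conceptual subtlety to be exactly this scale selection: one must resist pushing $s$ to infinity and instead exploit that a single fixed radius already separates the polynomial volume growth upstairs from the (at worst) linear volume lower bound downstairs. Everything else is bookkeeping with the cited estimates; in particular, the finiteness of $\pi_{1}(\mathcal{M})$ is recovered for free, since an infinite group would force the left side of the covering identity to be infinite, contradicting the finite upper bound $C(n)s^{n/2}$.
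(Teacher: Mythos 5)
Your proposal is correct and follows essentially the same route as the paper: the covering identity at a fixed scale, the upper bound $\operatorname{Vol}_{\tilde g}(\{\tilde f\le s\})\le C(n)s^{n/2}$, the inclusion $B_r(O)\subseteq\{f\le \tfrac14(r+\sqrt{2n})^2\}$ from the potential estimate, and the Munteanu--Wang lower volume bound. The only (immaterial) difference is the choice of radius: the paper takes $r=\sqrt{2n}$, hence $s=2n$, while you take $r=1$; both yield a bound of the form $C(n,\int_{\mathcal{M}}e^{-f}d\mu)$.
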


\begin{proof}
By the above, we have that $C(n)(2n)^{\frac{n}{2}}\!\geq\!\operatorname{Vol}%
_{\tilde{g}}(\{\tilde{f}\!\leq\!2n\})\!=\!|\pi_{1}(\mathcal{M}%
)|\operatorname{Vol}_{g}(\{f\!\leq\!2n\})$. The proposition follows from this
and the inequality $\operatorname{Vol}_{g}(\{f\!\leq\!2n\})\geq
\operatorname{Vol}_{g}(B_{\sqrt{2n}}(O))\geq c(n,\int_{\mathcal{M}}e^{-f}%
d\mu)$.$\vspace{-0.06in}$
\end{proof}

\begin{remark}
Assume for $x\in\mathcal{M}$ and $r>0$ such that $R\leq r^{-2}$ in
$B_{r}\left(  x\right)  $, we have $\operatorname{Vol}B_{s}\left(  x\right)
\geq\kappa s^{n}$ for $0<s\leq r$. \emph{(}For shrinking GRS that are
singularity models, there is such a $\kappa>0$ by Perelman
\emph{\cite{Perelman1}.)} Since $R\leq2n$ on $B_{\frac{1}{\sqrt{2n}}}(O)$, we
have $\operatorname{Vol}B_{\frac{1}{\sqrt{2n}}}(O)\!\geq\!(2n)^{-\frac{n}{2}%
}\kappa$ and we conclude that $|\pi_{1}(\mathcal{M})|\leq C(n)\kappa^{-1}%
$.$\vspace{-0.07in}$
\end{remark}

\end{document}